\newcommand{\RR}{\mathbb{R}}
\newcommand{\CC}{\mathbb{C}}
\newcommand{\from}{\colon}
\newcommand{\SSS}{\mathbb{S}}
\newcommand{\lie}[1]{\mathfrak{#1}}
\newcommand{\lb}{\llbracket}
\newcommand{\rb}{\rrbracket}
\DeclareMathOperator{\range}{ran}
\DeclareMathOperator{\vol}{vol} 
\newcommand{\M}{C}
\DeclareMathOperator{\area}{area}
\newcommand{\calV}{\mathcal{V}}
\newcommand{\calH}{\mathcal{H}}
\newcommand{\calE}{\mathcal{E}}
\DeclareMathOperator{\Diff}{Diff}
\DeclareMathOperator{\Span}{Span}
\newcommand{\bigO}{\mathcal{O}}
\theoremstyle{plain}
\newtheorem{theorem}{Theorem}
\theoremstyle{definition}
\newtheorem{definition}{Definition}
\newtheorem*{corollary}{Corollary}
\newtheorem*{remark}{Remark}
\title{The Prytz connections}
\author{Geir Bogfjellmo\footnote{Corresponding author, Norwegian University of Life Sciences}, Charles Curry\footnote{Norwegian University of Science and Technology}, Sylvie Vega--Molino\footnote{University of Bergen}}
\pgfplotsset{compat=1.18} 
\begin{document}
	
	\maketitle

    \abstract{
    The Prytz planimeter is a simple mechanical device that historically was used to approximate areas of plane regions.

    In this article, we present a mathematical description and analysis of the planimeter in terms of sub-Riemannian geometry and in terms of connections and horizontal lifts -- central concepts in differential geometry.
    }
	\section{Introduction and historical background}

	A \emph{planimeter} is a mechanical or electronic device for measuring the area of a region $\Omega$, typically by tracing its outline.
    The first planimeter was invented by Jakob Amsler-Laffon in 1854.  
   
    For more background on planimeters, see also Prof. Foote's webpage \url{http://persweb.wabash.edu/facstaff/footer/Planimeter/PLANIMETER.HTM}.

    The key operating principle for planimeters is the Moving Segment Theorem \cite{foote2006volume}.
	\begin{theorem} Let $p,q\from [0, T]\to \RR^2$ be two parametrized closed curves in the plane, and let $\ell(t)$ denote the moving line segment from $p(t)$ to $q(t)$. Let $A_\ell$ be the signed area swept out by $\ell(t)$, $t\in [0,T]$, and $A_p, A_q$ the signed areas of the regions enclosed by $p$ and $q$, respectively. Then
	\[A_\ell=A_p-A_q.\]
	\end{theorem}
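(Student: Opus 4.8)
The plan is to realize the region swept by $\ell$ as the image of an explicit map from a rectangle, and then reduce the claim to Green's theorem.

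First, parametrize the one-parameter family of segments by
\[
\gamma\from R:=[0,T]\times[0,1]\longrightarrow\RR^2,\qquad \gamma(t,s)=(1-s)\,p(t)+s\,q(t),
\]
so that $s\mapsto\gamma(t,s)$ traces the segment $\ell(t)$. The appropriate meaning of the \emph{signed} area swept out -- area counted with sign and multiplicity -- is the integral of the pulled-back area form,
\[
A_\ell=\int_R\gamma^*\omega,\qquad\omega=dx\wedge dy,
\]
where $R$ carries the standard orientation of $dt\wedge ds$. I would adopt this as the working definition; where the sweep is an embedding it coincides with the naive notion, and in general it is exactly what one obtains by chopping the sweep into infinitesimal almost-parallelograms spanned by $\partial_t\gamma$ and $\partial_s\gamma$.

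Next, writing $\omega=d\lambda$ for the primitive $\lambda=\tfrac12(x\,dy-y\,dx)$ and applying Stokes' theorem,
\[
A_\ell=\int_R d(\gamma^*\lambda)=\int_{\partial R}\gamma^*\lambda.
\]
The boundary $\partial R$ consists of four edges. Along $s=0$ one has $\gamma(t,0)=p(t)$, contributing $\oint\lambda=A_p$ (using $A_c=\tfrac12\oint_c(x\,dy-y\,dx)$ for a closed curve $c$); along $s=1$ one has $\gamma(t,1)=q(t)$, traversed with the opposite orientation, contributing $-A_q$. Along the two vertical edges $t=0$ and $t=T$, the closedness of $p$ and $q$ gives $\gamma(0,s)=(1-s)p(0)+s\,q(0)=(1-s)p(T)+s\,q(T)=\gamma(T,s)$, so these edges are traced along the same curve in opposite directions and cancel. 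Summing, $A_\ell=A_p-A_q$.

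The main obstacle is not any computation but the careful bookkeeping of orientations: pinning down the sign convention for ``signed swept area'' so that it matches $\int_R\gamma^*\omega$, and checking that the two vertical edges genuinely cancel -- which is precisely where the hypothesis that $p$ and $q$ are \emph{closed} is used. As an alternative to Stokes' theorem one can compute directly from $\partial_s\gamma=q(t)-p(t)$ and $\partial_t\gamma=(1-s)p'(t)+s\,q'(t)$: integrating $\det(\partial_t\gamma,\partial_s\gamma)$ over $s\in[0,1]$ yields $\tfrac12\,(p'+q')\wedge(q-p)$ (with $a\wedge b:=a_1b_2-a_2b_1$); expanding, the two ``diagonal'' terms integrate to $A_p-A_q$, while the cross term equals $\tfrac12\,\tfrac{d}{dt}(p\wedge q)$ and therefore integrates to zero around the closed loop.
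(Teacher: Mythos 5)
Your proof is correct: the pullback/Stokes argument with $\gamma(t,s)=(1-s)p(t)+sq(t)$, together with the cancellation of the two vertical edges coming from the closedness of $p$ and $q$, is exactly the standard proof of the Moving Segment Theorem. Note that this paper does not prove the theorem itself but cites it (Foote's work and Bryant--Sangwin), and the argument given there is essentially the same Green's-theorem computation you carried out, so there is nothing to reconcile; your explicit convention $A_\ell=\int_R\gamma^*\omega$ (area counted with sign and multiplicity) is the right reading of ``signed area swept out.''
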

    \begin{figure}[htp]
    \begin{center}
	\includegraphics[width=0.6\textwidth]{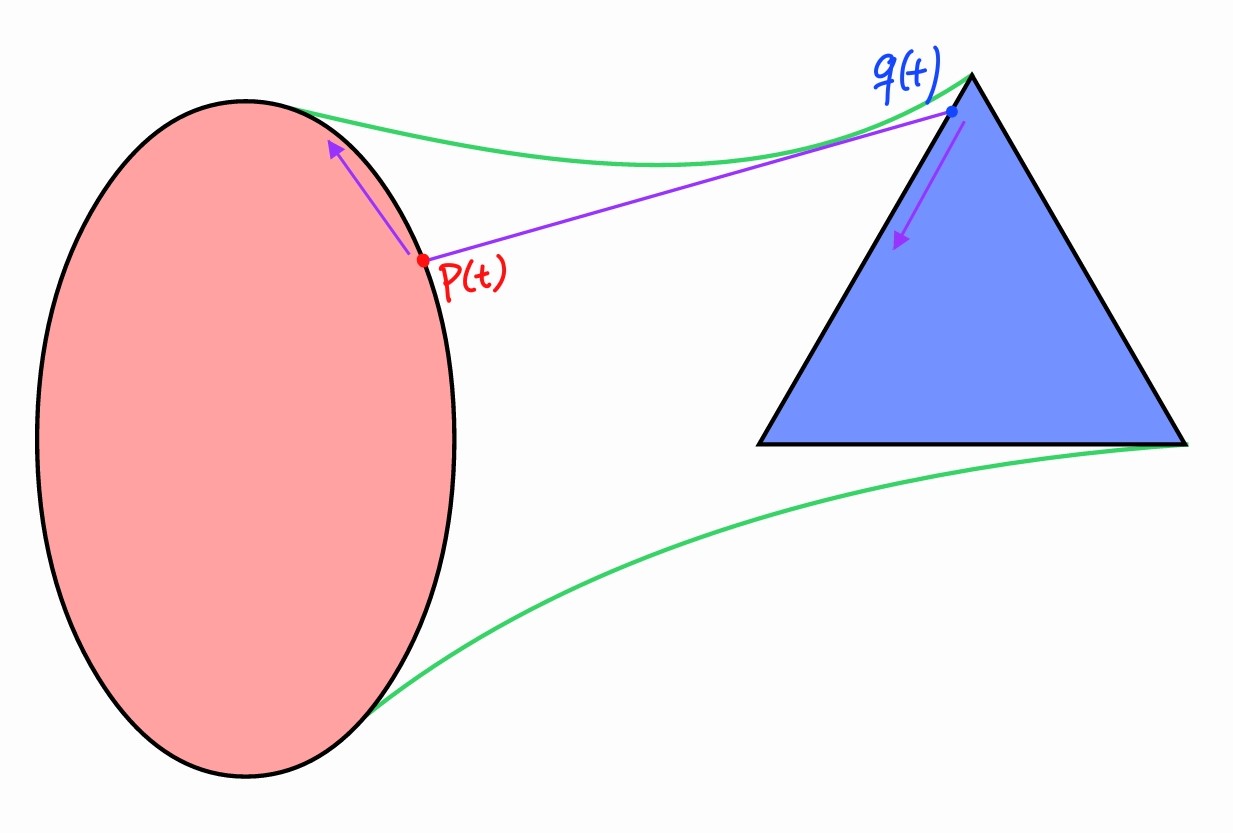}
    \end{center}
    \caption{An illustration of the moving segment theorem. The area between the two shapes is swept out twice, once in the positive direction and once in the negative direction.}
    \label{fig: mst}
    
	\end{figure}
	
 	In a planimeter, the line segment $\ell$ is realized as a rigid rod of a fixed length, and one end of the rod is made to follow the perimeter of a region, $p\from [0,T]\to \partial \Omega_1\subset \RR^2.$ The planimeter is then constructed such that such that $A_\ell$ and $A_q$ are either restricted to be zero or possible to measure.
 	
 	In the Amsler planimeters, $q$ is restricted to a curve in $\RR^2$ (a circle for the polar planimeters, a line for the linear planimeters). The two restrictions imposed on $q$ work together to ensure that $q(T)=q(0)$, and that $A_q=0$. 
    Thus the area of $A_p=\area(\Omega_1)$ can be read directly from the signed area swept by the line segment. This signed area is then measured via a gauge driven by a perpendicular wheel on the rod.
 	
 	(See \url{https://mathweb.ucsd.edu/~jeggers/Planimeter/Amsler_Radial/Amsler_Radial_gallery.html} or \cite[Section 8.4]{bryant2008round} for pictures.)
    
    The Amsler planimeters were complicated pieces of equipment, and were quite costly.
    
    A competitor to Amsler's planimeters appeared in 1875. In contrast to Amsler's planimeters, Holger Prytz' planimeter can be constructed by ``a country blacksmith'' (Prytz' own suggestion \cite{prytz1896prytz}) or by yourself from a metal coathanger (see \cite[Chapter 8]{bryant2008round} for a recipe.)
    
    The Prytz planimeter (also known as a ``stang'' or ``hatchet'' planimeter) is mathematically not as precise as the Amsler planimeter. Strictly speaking, it does not measure the area of $\Omega_1$, but a more complicated geometric quantity approximating the area.
    
    Despite its apparent simplicity, the Prytz planimeter is a geometrically interesting object. In the present article, we will show how the motion of the planimeter induces a sub-Riemannian structure on its configuration space, how the motion of the planimeter can be described as the horizontal lift of various connections in differential geometry, and how this can be used to understand what the Prytz planimeter measures and its relation to the area of $\Omega_1$.

    We conclude by noting that the planimeter is a type of kinematic linkeage of more general interest, being for example closely related to the motion of articulated vehicles.

    
    \section{The Prytz planimeter}

    \begin{figure}
        \includegraphics[scale=0.1, angle=90]{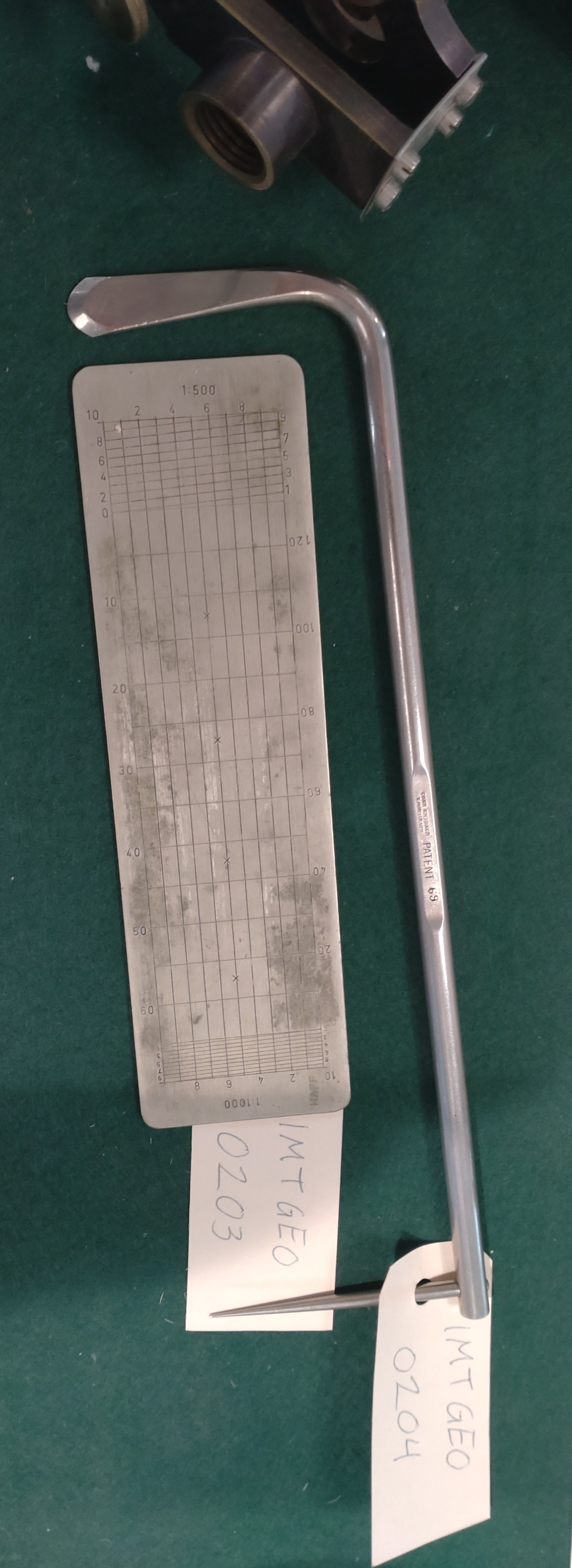}
        \caption{A Prytz planimeter}
    \end{figure}
     
   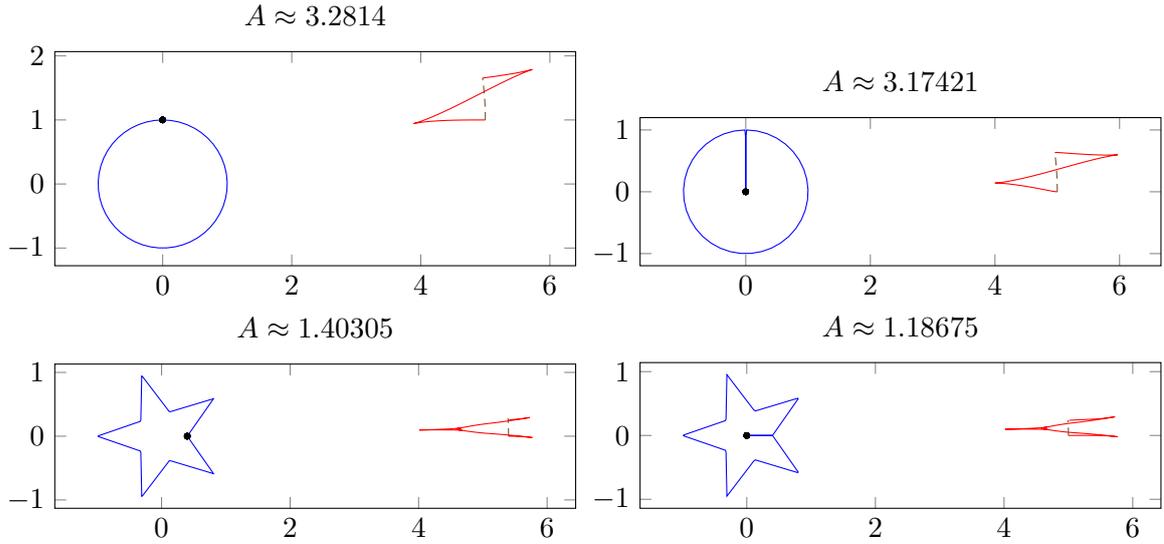
\begin{figure}
    	\begin{tikzpicture}
	\pgfplotstableread{Figuredata/Circle1.csv}\datatable
	\def \totangle {0.131255}
	\def \xstart {0}
	\def \ystart {1}
	\def \lll {5}
	\pgfmathsetmacro\Area{\lll*\lll*\totangle}
	
	\begin{axis}[axis equal image =true, 
				trig format plots=rad,
				title=$A\approx \Area$]
		\addplot table[x=px,y=py, mark=none]{\datatable};
		\addplot table[x=qx,y=qy, mark=none]{\datatable};
		\addplot+[domain=0:\totangle, densely dashed, mark=none] ({\xstart+5*cos(x)}, {\ystart+5*sin(x)});
		\addplot+[mark=*, mark size=1] (\xstart,\ystart);
	\end{axis}
\end{tikzpicture}
\begin{tikzpicture}
	\pgfplotstableread{Figuredata/Circle2.csv}\datatable
	\def \totangle {0.12697}
	\def \xstart {0}
	\def \ystart {0}			
	\def \lll {5}
	\pgfmathsetmacro\Area{\lll*\lll*\totangle}
	
	\begin{axis}[axis equal image =true, 
		trig format plots=rad,
		title=$A\approx \Area$]
		\addplot table[x=px,y=py, mark=none]{\datatable};
		\addplot table[x=qx,y=qy, mark=none]{\datatable};
		\addplot+[domain=0:\totangle, densely dashed, mark=none] ({\xstart+5*cos(x)}, {\ystart+5*sin(x)});
		\addplot+[mark=*, mark size=1] (\xstart,\ystart);
	\end{axis}
\end{tikzpicture}
\begin{tikzpicture}
	\pgfplotstableread{Figuredata/Star1.csv}\datatable
	\def \totangle {0.05612}
	\def \xstart {0.4}
	\def \ystart {0}
	\def \lll {5}
	\pgfmathsetmacro\Area{\lll*\lll*\totangle}

	\begin{axis}[axis equal image =true, 
				trig format plots=rad,
				title=$A\approx \Area$]
		\addplot table[x=px,y=py, mark=none]{\datatable};
		\addplot table[x=qx,y=qy, mark=none]{\datatable};
		\addplot+[domain=0:\totangle, densely dashed, mark=none] ({\xstart+5*cos(x)}, {\ystart+5*sin(x)});
		\addplot+[mark=*, mark size=1] (\xstart,\ystart);
	\end{axis}
\end{tikzpicture}
\begin{tikzpicture}
	\pgfplotstableread{Figuredata/Star2.csv}\datatable
	\def \totangle {0.04747}
	\def \xstart {0}
	\def \ystart {0}
	\def \lll {5}
	\pgfmathsetmacro\Area{\lll*\lll*\totangle}
	
	\begin{axis}[axis equal image =true, 
		trig format plots=rad,
		title=$A\approx \Area$]
		\addplot table[x=px,y=py, mark=none]{\datatable};
		\addplot table[x=qx,y=qy, mark=none]{\datatable};
		\addplot+[domain=0:\totangle, densely dashed, mark=none] ({\xstart+5*cos(x)}, {\ystart+5*sin(x)});
		\addplot+[mark=*, mark size=1] (\xstart,\ystart);
	\end{axis}
\end{tikzpicture}
    	
    	\caption{Some examples of planimeter paths. The blue line follows the tracer end $p(t)$ and the red line the chisel end $q(t)$. The circular arc segment between $q(0)$ and $q(T)$ with center in $p(0)=p(T)$ is shown as a dashed line. The exact values are
    		$A=\pi\approx 3.14159$ for the circle, and $A=2\sin\left(\frac{\pi}{5}\right)\approx 1.17557$ for the star.}
      \label{fig: planimeter}
    \end{figure}
    
    The Prytz planimeter is a rigid metal rod with a perpendicular prong at either end. One prong, the \emph{tracer end} is sharpened into a point. The other prong, which we will call the \emph{chisel end}, is sharpened into an edge parallel to the rod.
    
   	In operation, the tracer end $p$ is made to follow the boundary of a (simply connected) region $\Omega_1$ drawn on a paper. The motion induces the chisel end $q$ to follow a curve in the plane limited by nonholonomic constraints.
   	
   	When tracer end returns to its starting point, the chisel end will not have returned to its starting point, but will have rotated by an angle $\Delta \theta$.  See Figure \ref{fig: planimeter} for examples of paths traced out by the planimeter.

    By the moving segment theorem, it is possible to show that
    \[\area(\Omega_1) = l^2\Delta \theta  + A_q\]
    where $A_q$ is the signed area between the path of the chisel and $\Delta\theta$ and the circular arc between $q(0)$ and $q(T)$ with radius $l$. The proof can be found in \cite[Section 8.7]{bryant2008round} or \cite{foote1998geometry}.
    
   	To increase the accuracy of the measurement, i.e, reduce $A_q$, one possibility suggested by Prytz is to start with the tracer at the centroid\footnote{In practice: at an estimation of the centroid.} of $\Omega_1$, move the tracer out to the boundary along a straight line, then trace the boundary of $\Omega_1$, before returning to the start point by retracing the straight line in reverse.
   	
   	\begin{remark}
   	$\area(\Omega_1)$ can approximated as either 
   	\begin{equation}
   		\area(\Omega_1) \approx l^2\Delta \theta
   		\label{eq: areaformulaangle}
   	\end{equation}
   	or 
   	\begin{equation}
   		\area(\Omega_1)\approx l \cdot d
   		\label{eq: areaformulacord}
   	\end{equation}
   	where $d=2l\sin\left(\frac{\Delta \theta}{2}\right)= \|q(0)-q(T)\|$ is the distance between the start and end point of the chisel end.
   	
    The simplicity of measuring $d$ makes \eqref{eq: areaformulacord} preferable from a practical point of view, and a simple series expansion shows that
    $ld-l^2\Delta \theta = \mathcal{O}\left(\frac{1}{l^4}\right)$.
    In light of the other errors inherently present in the Prytz planimeter, this difference does not matter in practice.

   	This did not prevent engineers Goodman and Scott from patenting and marketing two separate ``improved'' hatch planimeters which could accurately measure \eqref{eq: areaformulaangle}.
   	This led to a sometimes heated debate in the form of letters to the magazine \emph{Engineering}. Excerpts of this debate can be found in \cite[Chapter 8.6]{bryant2008round}.
  
   	\end{remark}

	\subsection{The configuration space}
	
	Let $p\in \RR^2$ denote the tracer end and $q\in \RR^2$ the chisel end of the planimeter. 
    When the planimeter is used, $p$ and $q$ will move around in the plane restricted by a holonomic constraint $\|p-q\|=l$ and a nonholonomic constraint $ \dot{q}\parallel q-p$. 
	
	These two constraints are sufficient to ensure that for a given  path of the tracer $p\from [0,T]\to \RR^2$, and an allowed initial position $q_0$, there is a unique path of the chisel $q\from [0,T]\to \RR^2$ satisfying the constraints and $q(0)=q_0$.

    Let $C$ denote all possible configurations of the planimeter. We have the following descriptions of $C$:
    \begin{enumerate}
        \item As a submanifold of $\RR^2$:  $C=\{(p, q)\in \RR^2\times \RR^2 \mid  \|p-q\|=l\}$.
        \item As a (trivial) fibre bundle $C=\RR^2\times \SSS_1$ with base $\RR^2$ and fibres isomorphic to $\SSS_1$. We use coordinates $(x,y,\theta)$, where $(x,y)$ are the coordinates of $p$ and $\theta$ is the angle between the positive $x$-axis and the vector $q-p$. The canonical projection is 
        \[\pi \from C\to \RR^2, \qquad \pi(x,y,\theta)=(x,y).\]
    \end{enumerate}


    Let $e\from[0,T]\to C, e(t)=(x(t), y(t), \theta(t))$ be a possible path for the planimeter. The nonholonomic constraint $\dot{q}(t)\parallel q(t)-p(t)$ can be written as $-\sin \theta \dot{x}+\cos \theta  \dot{y}+l\dot{\theta}=0$
    or
    $\eta_e(\dot{e})=0$, where $\eta$ is the one-form
    \begin{equation}
    \eta=-\sin \theta  dx+\cos \theta  dy +ld\theta.
    \label{eq: so2}
    \end{equation}

    \subsection{The motion of the planimeter}
    Let $p\from [0,T]\to \RR^2$ form the boundary of some area we want to measure the area of. When the tracer end is moved according along the curve $p(t)=(x(t),y(t))$, the chisel end is dragged or pushed according to the nonholonomic constraint. The resulting angle $\theta\from [0,T]\to \SSS_1$ solves the ordinary differential equation
    \begin{equation}
    \dot{\theta}= \frac{1}{l}\left(\sin \theta \dot{x}-\cos \theta \dot{y}\right),
    \label{eq: thetaODE}
    \end{equation}
    with initial value $\theta(0)=\theta_0$ given by the initial orientation of the planimeter.

    At the end, the area is approximated as $A\approx l^2(\theta(T)-\theta(0))$. This approximation is dependent on the initial angle $\theta_0$.

    \section{Prytz connections and sub-Riemannian geometry}

    The above equation describes how a curve in the plane induces a motion of a planimeter. In the fibre bundle interpretation of the configuration space, this amounts to a prescription of how to lift any curve on the base of the bundle to the total space. Identifying this with a ``horizontal lift'' we have arrived at the heart of differential geometry. The standard procedure is to define a connection, from which we obtain horizontal lifts. An alternative perspective arising from Cartan geometry leads to the closely related notion of development. It is often desirable to define connections on a principal bundle satisfying an equivariance -- a principal connection.

    We will return to a principal connection for the planimeter motions in section \ref{sec: principalPrytz}. In this section, however, we consider connections which lack this equivariance.

	\subsection{Infinitesimal connection}
    In this section, we describe how the Prytz planimeter describe a connection on the fibre bundle $C$.
    
    Recall that for a fibre bundle $C$ over the base $\RR^2$, the \emph{vertical bundle} is the vector subbundle $\calV=\ker T\pi \subset TC$.
    In our case, the vertical bundle is spanned by the vector field $\frac{\partial}{\partial \theta}$.

    For every $e\in C$, we can form the vector space $\calH_e\subset T_eC$ consisting of all tangent vectors $v$ satisfying the nonholonomic constraint $\eta_e(v)=0$. Then $\calH_e$ is a complement to $\calV_e$, and
    $\calH=\bigcup_{e\in C} \calH_e$ is a vector subbundle everywhere transversal to $\calV$ that we call the \emph{horizontal bundle}.

    Such a splitting of $TC$ into a vertical and horizontal bundle is what Ehresmann called an infinitesimal connection on a fibre bundle.
    
	\begin{definition}[\cite{marle2014works, ehresmann1950connexions}]
    \label{def: infinitesmalconnection}
		Let $E(M,F)$ denote a fibre bundle with base manifold $M$ and fibres diffeomorphic to $F$, and let $\pi\from E \to M$ be the canonical projection. An \emph{infinitesimal connection} on $E$ is a vector sub-bundle $\calH \subset TE$, that is transversal to the vertical bundle $\calV=\ker T\pi \subset TE$, i.e. such that for each $e\in E$, $T_e E=\calV_e \oplus \calH_e$.
	\end{definition}
	Equivalently, we can consider an infinitesimal connection to be defined by a smooth section of linear maps $\Phi_e\from T_eE\to T_eE$ with constant rank satisfying $\Phi_e\circ \Phi_e=\Phi_e$. The horizontal and vertical bundles are given by
    $\calH_e=\ker \Phi_e$ and $\calV_e=\range \Phi_e$.
	
    Equipped with an infinitesimal connection, we can define horizontal lifts of paths in the base manifold $\RR^2$.
    \begin{definition}
    
        Let $E(M,F)$ be a fibre bundle equipped with horizontal bundle $H$, 
        let $\gamma \from [0,T]\to M$ be a smooth curve in the base $M$ with $\gamma(0)=p$.
        and let $e$ be point in the fibre $\pi^{-1})(p).$ The horizontal lift of $\gamma$ through $e$ is the unique smooth curve $\tilde{\gamma}\from[0,T]\to E$ satisfying
        $\pi(\tilde{\gamma(t)})=\gamma(t)$, $\dot{\tilde{\gamma}}(t)\in H_{\tilde{\gamma}(t)}$ for all $t$ and $\tilde{\gamma}(0)=e$.
    \end{definition}
    We also define the horizontal lift of a vector field $U$ over $M$ to be the vector field $\tilde{U}$ over $E$ that is everywhere horizontal and $\pi$-related to $U$.
    
    For the Prytz planimeter, the horizontal lift of a path $p\from[0,T]\to \RR$ describes the motion of the planimeter when the tracer end follows the path $p(t)$.

    \begin{remark}
    To get a first understanding of why the Prytz planimeter measures area, consider what happens when the tracer follows the boundary of an infinitesimal square:
    \begin{center}
    \begin{tikzpicture}
      \coordinate (A) at (0,0);
      \coordinate (B) at (2,0);
      \coordinate (C) at (2,2);
      \coordinate (D) at (0,2);
    
      \draw[->] (A) --node[below]{$\epsilon \frac{\partial}{\partial x}$} (B);
      \draw[->] (B) --node[right]{$\epsilon \frac{\partial}{\partial y}$}  (C);
      \draw[->] (C) --node[above]{$-\epsilon \frac{\partial}{\partial x}$}  (D);
      \draw[->] (D) --node[left]{$-\epsilon \frac{\partial}{\partial y}$}  (A);
    \end{tikzpicture}
    \end{center}
    
    In the picture, $\frac{\partial}{\partial x}$ and $\frac{\partial}{\partial y}$ denote the coordinate vector fields on $\RR^2$.

    The horizontal lift will follow the horizontally lifted vector fields
    \begin{equation}\begin{aligned}
    \widetilde{\frac{\partial}{\partial x}}=X&=\frac{\partial}{\partial x} + \frac{1}{l}\sin \theta \frac{\partial}{\partial \theta}\\
	\widetilde{\frac{\partial}{\partial y}}=Y&=\frac{\partial}{\partial y} - \frac{1}{l}\cos \theta \frac{\partial}{\partial \theta}
    \end{aligned}
    \label{eq: horizontalvectorfields}
    \end{equation}
    In contrast to the original path, the lifted path is not closed. The failure of such an infinitesimal square to be closed is measured by the \emph{Jacobi--Lie} bracket of vector fields.
    \begin{center}
     \begin{tikzpicture}
      \coordinate (A) at (0,0);
      \coordinate (B) at (2,0);
      \coordinate (C) at (2.3,2);
      \coordinate (D) at (0.1,2.4);
      \coordinate (E) at (-0.4, 0.6);
    
      \draw[->] (A) --node[below]{$\epsilon X$} (B);
      \draw[->] (B) --node[right]{$\epsilon Y$}  (C);
      \draw[->] (C) --node[above]{$-\epsilon X$}  (D);
      \draw[->] (D) --node[left]{$-\epsilon Y$}  (E);
      \draw[->, dashed] (A) --node[left]{$\epsilon^2 \lb X,Y\rb $} (E);
    \end{tikzpicture}
    \end{center}
    For the vector fields \eqref{eq: horizontalvectorfields}, we have $\lb X, Y\rb= \frac{1}{l^2}\frac{\partial}{\partial \theta}$.

    Loosely speaking: 

    \emph{When the tracer traces out the boundary of an infinitesimal region with area $\epsilon^2$, the planimeter rotates by the angle $\epsilon^2/l^2$.}
    
    \end{remark}

    More precisely, the curvature of an infinitesimal connection, as defined in \cite[p. 73]{kolar1993natural}, is a two-form $R$ on $C$ taking values in $TC$.

    On arbitrary vector fields $U$ and $V$ over $C$, $R$ is given by the function
	\[R(U,V) = \Phi \left(\lb U-\Phi(U), V-\Phi(V)\rb\right)\]
    where $\Phi\from TC\to TC$ is the section of linear maps with $\ker \Phi=\calH$, $\range \Phi=\calV$, and $\lb\cdot, \cdot \rb$ is the Jacobi--Lie bracket of vector fields.

    In our case, this tracks the rotation of the planimeter
    \[R(U,W) = \pi^{*}\vol(U,W)\cdot \frac{1}{l^2}  \frac{\partial}{\partial \theta},
	\]
	where $\pi^{*}\vol$ is the pull-back of the area form on $\RR^2$ to a two-form on $C$.

    In other words, for two vectors $U_e$ and $W_e$ in $T_eC$, the curvature $R_e(U_e,W_e)$ is a vertical vector with length proportional to the area of the parallelogram spanned by the projection of the two vectors onto $T\RR^2$.

    \subsection{Sub-Riemannian perspective}
In the setting of sub-Riemannian geometry, one works with a smooth manifold $\M$ equipped with a pair $(\calE,g_\calE)$ where  $\calE$ is a subbundle of $T\M$ and $g_\calE$ is a symmetric, positive-definite (2,0)-tensor on $\calE$.  One insists that the bracket-generating condition holds, which is said to be satisfied if at every point $p \in \M$ one can generate all of $T_p\M$ by taking sufficiently many Lie brackets of vector fields in $\Gamma(\calE)$ at $p$.  Notably, this is equivalent to H\"ormander's condition in PDEs. One can define the sub-Riemannian (Carnot-Carathéodory) distance $d_{cc}(p,q)$ for two points $p,q \in \M$ by the usual infimum formula taken over the space of smooth paths $C_{\calE,p,q}$  connecting $p$ to $q$ such that $\dot\gamma(t) \in \calE_{\gamma(t)}$ at almost every point along $\gamma$.  The now famous theorem of Chow and Rashevsky \cite{C39,R38} tells us that $(\M,d_{cc})$ is a complete metric space precisely when $\calE$ is bracket-generating.  Such a triple $(\M,\calE,g_\calE)$ is called a sub-Riemannian manifold.

We introduce a sub-Riemannian structure on the description of the Prytz planimeter by defining $\calE$ to be the span of the vector fields, just as in \eqref{eq: horizontalvectorfields}
\begin{align*}
X &= \frac{\partial}{\partial x} + \frac{1}{l}\sin \theta \frac{\partial}{\partial \theta}, \\ 
Y& = \frac{\partial}{\partial y} - \frac{1}{l}\cos \theta \frac{\partial}{\partial \theta},
\end{align*}
and letting the sub-Riemannian metric $g_\calE$ be such that $X,Y$ are orthonormal.  The first bracket is $[X,Y] = \frac{1}{l^2} \frac{\partial}{\partial \theta}$ from which we see that $\calE$ is indeed bracket-generating.

\begin{corollary}
There is a path between any two configurations of a planimeter via planimeter motions.
\end{corollary}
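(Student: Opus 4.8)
\emph{Proof idea (plan).} The statement is essentially an application of the Chow--Rashevsky theorem quoted above; the proof is short once the right objects are identified, and I flag the single point that requires care.

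\textbf{Step 1: planimeter motions are exactly the horizontal curves of $\calE$.} First I would check that a curve in $C$ is realizable as a planimeter motion if and only if it is a horizontal curve for $\calE$. By construction, moving the tracer along a path $\gamma\from[0,T]\to\RR^2$ produces the horizontal lift $\tilde\gamma$, a curve in $C$ tangent to $\calH=\calE$ at every time. Conversely, if $c\from[0,T]\to C$ is a (piecewise) smooth curve with $\dot c(t)\in\calE_{c(t)}$ for all $t$, then $\gamma:=\pi\circ c$ is a smooth curve in $\RR^2$, and $c$ projects to $\gamma$ and is everywhere horizontal; by the uniqueness clause in the definition of horizontal lift, $c$ \emph{is} the horizontal lift of $\gamma$ through $c(0)$, i.e.\ it is precisely the motion the planimeter executes when its tracer runs along $\gamma$. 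Because the nonholonomic constraint $\dot q\parallel q-p$ is invariant under $t\mapsto -t$, both orientations of such a curve are admissible motions, so there is no obstruction from direction of travel. Hence ``connected via planimeter motions'' means literally ``joined by a horizontal path of $\calE$''.

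\textbf{Step 2: invoke Chow--Rashevsky.} The remaining ingredients are already established: $X$, $Y$ and $\lb X,Y\rb=\tfrac1{l^2}\frac{\partial}{\partial\theta}$ span $T_eC$ for every $e\in C$, so $\calE$ is bracket-generating with constant growth vector $(2,3)$ -- indeed $\calE$ is a contact distribution, with $\eta$ a contact form since $\eta\wedge d\eta = -\,dx\wedge dy\wedge d\theta\neq 0$ -- and $C=\RR^2\times\SSS_1$ is connected. The Chow--Rashevsky theorem \cite{C39,R38} then provides, for any $e_0,e_1\in C$, a horizontal path from $e_0$ to $e_1$, which by Step 1 is a planimeter motion. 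This proves the corollary.

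\textbf{Alternative and the point needing care.} To avoid quoting Chow--Rashevsky one could instead build the connecting motion by hand: the flows of $X$ and $Y$ are complete (in their flow equations the $\theta$-component satisfies $\dot\theta=\pm\tfrac1l\sin\theta$ or $\dot\theta=\pm\tfrac1l\cos\theta$, with bounded right-hand side), flowing along $X$ and $Y$ reaches any prescribed value of $(x,y)$, and a small commutator loop $\varphi_X^{-\epsilon}\varphi_Y^{-\epsilon}\varphi_X^{\epsilon}\varphi_Y^{\epsilon}$ returns $(x,y)$ exactly to its start while changing $\theta$ by $\pm\tfrac{\epsilon^2}{l^2}+\bigO(\epsilon^3)$ -- this is the infinitesimal-square computation in the remark above. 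Concatenating such loops, with a final $X$- and $Y$-flow to absorb the residual translation, reaches the target configuration. The one delicate point -- and the only genuine work in either route -- is controlling the accumulated $\bigO(\epsilon^3)$ error so that the scheme actually converges to $e_1$; this is exactly the technical heart of the Chow--Rashevsky argument, which is why simply citing it is the cleanest presentation.
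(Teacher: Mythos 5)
Your proof is correct and takes the same route as the paper, which simply invokes the Chow--Rashevsky theorem using the already-established bracket-generating property $\lb X,Y\rb=\frac{1}{l^2}\frac{\partial}{\partial\theta}$. Your Step 1 (identifying planimeter motions with horizontal curves) and the sketched hands-on alternative are sensible elaborations, but the core argument is identical.
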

\begin{proof}
This follows from Chow-Rashevskii.
\end{proof}

\begin{remark}
Observe that the sub-Riemannian horizontal distribution $\calE$ and the horizontal sub-bundle $\calH$ determining the Ehresmann connection in the previous section coincide.  This is to say that the connection is adapted to the sub-Riemannian structure, see \cite{Vega-Molino_2020} for an overview.
\end{remark}

We also observe that the form $\eta$ in \eqref{eq: so2} is contact, and the sub-Riemannian structure we have defined is the associated contact sub-Riemannian structure.  It follows that there can be no abnormal geodesics. 

\subsubsection*{Hamiltonian Mechanics}
Following the Hamiltonian perspective (c.f. \cite{M02,ABB20}) it is of interest to consider the Hamiltonian 
\begin{align*}
H 	&= \frac{1}{2} \left( P_X^2 + P_Y^2 \right) \\
	&= \frac{1}{2} \left( p_x^2 + p_y^2 + \frac{1}{l^2}p_\theta^2 \right) + \frac{1}{l}\left( \sin\theta p_x - \cos\theta p_y \right) p_\theta
\end{align*}

which then induces the Hamiltonian system
\begin{equation*}
\begin{aligned}
    \dot x 		&= p_x + \frac{1}{l} \sin\theta p_\theta 		
    				& \dot p_x &= 0 \\
    \dot y 		&= p_y - \frac{1}{l} \cos\theta p_\theta 		
    				& \dot p_y &= 0 \\
    \dot\theta 	&= \frac{1}{l^2} p_\theta + \frac{1}{l} \left( \sin\theta p_x - \cos\theta p_y \right)
    				& \dot p_\theta &= -\frac{1}{l}\left( \cos\theta p_x + \sin\theta p_y \right) p_\theta
\end{aligned}
\end{equation*}
the solutions of which are the normal sub-Riemannian geodesics.  

The problem reduces to solving only the last line of the system, and moreover it can be shown that $\theta$ is the solution of the autonomous differential equation
\begin{equation*}
    \ddot\theta = \frac{1}{2l^2}\left(2p_xp_y \cos(2\theta) + (p_x^2 - p_y^2)\sin(2\theta) \right).
\end{equation*} 

To give some intution, recall that a sub-Riemannian geodesic is a curve which is locally length-minimizing (for the sub-Riemannian metric).  For example, when tracing a planimeter around a closed loop the initial and final points in $C$ differ only in the $\theta$ coordinate; a sub-Riemannian geodesic projecting onto a closed loop therefore minimizes $\Delta\theta$ and so by \eqref{eq: areaformulaangle} it also approximates a minimization of area. Therefore the projection must approximate a circle.  The precise shape is determined by the higher-order error terms.

\begin{figure}
    \centering
    \begin{tikzpicture}
	\pgfplotstableread{Figuredata/Geodesic1.csv}\datatable
	\begin{axis}[axis equal image =true, legend pos=outer north east]
		\addplot+[mark=none] table[x=px,y=py]{\datatable};
        \addlegendentry{$p(t)$};
		\addplot+[mark=none] table[x=qx,y=qy]{\datatable};
        \addlegendentry{$q(t)$};
	\end{axis}
\end{tikzpicture}

\begin{tikzpicture}
	\pgfplotstableread{Figuredata/Geodesic2.csv}\datatable
	\begin{axis}[axis equal image =true]
		\addplot table[x=px,y=py, mark=none]{\datatable};
		\addplot table[x=qx,y=qy, mark=none]{\datatable};
	\end{axis}
\end{tikzpicture}
    \caption{Examples of sub-Riemannian geodesics.}
\end{figure}
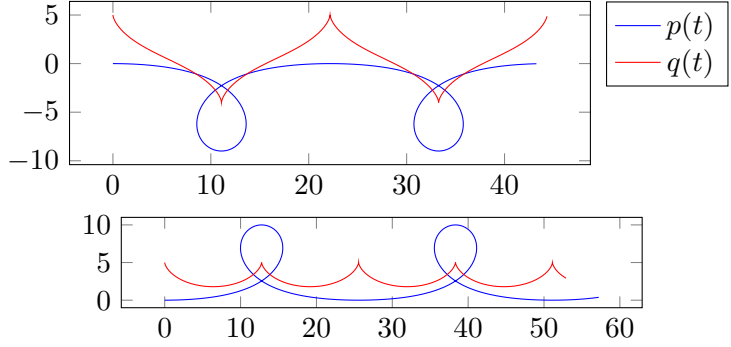

\begin{remark}
    We note that the vertical vector field $Z = \frac{1}{l^2}\frac{\partial}{\partial\theta}$ determines a foliation of $C$, however the Lie derivatives $(\mathcal{L}_U \tilde g_\calE)(Z,Z)$ do not vanish for $U \in \calE$ and so the foliation is not totally-geodesic (here $\tilde g_\calE$ is the Riemannian extention of $g_\calE$ making $X,Y,Z$ an orthonormal frame).  As a consequence the Eulerian approach to sub-Riemannian geometry via penalty metrics does not define an H-type foliation (see \cite{BGRV18}). Equivalently, we can understand that the contact structure is not $K$-contact.
\end{remark}

 \subsection{Linear and affine pseudoconnections}

    We move a step closer to mainstream differential geometry by noting that the fibres of the bundle $C$ can be identified with $SO(2)$. It is a principal bundle, where moreover the fibres are a subgroup of the general linear group, with its natural representation on the base $\RR^2$. The connection form (\ref{eq: so2}) is $\mathfrak{so}(2)$-valued, and can therefore be considered a linear pseudoconnection, i.e. the standard representation of $SO(2)$ on $\RR^2$ induces an associated connection on the tangent bundle of $\RR^2$. The $\mathrm{Ad}\,SO(2)$-equivariance is lacking however, hence the name pseudoconnection.
  
	More interesting perhaps is to take the perspective of Cartan geometry and enrich the connection form with a solder form to obtain a $\mathfrak{se}(2)$-valued pseudoconnection. This can be done in such a way that the curve traced out by the chisel end of the planimeter is the development of the curve traced out by the tracer. For this purpose, note that the chisel has coordinates $q=(\tilde{x}, \tilde{y}) = (x + l \cos (\theta), y + l\sin (\theta))$. Differentiating and rearranging, we obtain
\begin{eqnarray*}
  \dot{\tilde{x}} & = & \dot{x} \cos^2 \theta + \dot{y} \sin \theta \cos
  \theta,\\
  \dot{\tilde{y}} & = & \dot{x} \sin \theta \cos \theta + \dot{y} \sin^2
  \theta.
\end{eqnarray*}

In other words, if we define a basis of $\mathfrak{se} (2)$ by
\[
    e_1=\begin{pmatrix}
        0 & 0 & 0\\
        1 & 0 & 0\\
        0 & 0 & 0
    \end{pmatrix},
    \quad 
    e_2=\begin{pmatrix}
        0 & 0 & 0\\
        0 & 0 & 0\\
        1 & 0 & 0
    \end{pmatrix}, 
    \quad
    e_3=\begin{pmatrix}
        0 & 0 & 0\\
        0 & 0 & -1\\
        0 & 1 & 0
    \end{pmatrix},
\]
so that $e_3$ is the rotational basis vector and $e_1$ and $e_2$ the translational basis vectors, we can
define an affine pseudoconnection $\tilde{\omega}\from T\RR^2\to \lie{se}(2)$ by
\[ 
\begin{aligned}
\tilde{\omega} =&e_1\left(l \cos \theta (\cos \theta dx + \sin \theta dy) - l\sin \theta d \theta\right)\\
&+e_2\left(l \sin \theta (\cos \theta dx + \sin \theta dy) + l\cos \theta d \theta\right)\\
&+e_3\left(  l d \theta + \sin \theta dx - \cos \theta dy\right).
\end{aligned}
\]

\section{The principal Prytz connection}
\label{sec: principalPrytz}

    To obtain an equivariant connection on a principal bundle we must deal with the dependency of the subsequent motion on the initial angle.
    
	\subsection{Principal connections}
    One approach, following Foote is to look at the subgroup of diffeomorphisms on $\SSS_1$ generated by the planimeter. 
    We refer to \cite{foote1998geometry} for further details.

    First, let us recall \emph{principal bundles} and \emph{principal connections} (See \cite{kobayashi1963foundations}.)

    \begin{definition}A \emph{principal bundle} $P(M,G)$ is a fibre bundle with fibres diffeomorphic to a Lie group $G$, equipped with a right group action $P\times G\to P$. We will write the right action $(p,g)\mapsto R_g(p)$.
    \end{definition}
    Differentiating the right action with respect to $g$ at the identity $I\in G$, we get a Lie algebra morphism $\lie{g}\to \cal V$, where $\lie{g}$ is the Lie algebra of $G$ and $\cal V\subset TP$ is the vertical subbundle.
    We write this Lie algebra morphism $\xi \mapsto X_\xi$.
    \begin{definition}
        A \emph{principal connection} on $P$ is a $\lie{g}$-valued one-form $\omega$ on $P$ satisfying
        \[
        \begin{aligned}
        \omega(X_\xi)&=\xi,  &\text{for all }\xi &\in \lie{g},\\
        R_{g}^*\omega &= Ad_{g^{-1}} \circ \omega, &\text{for all } g&\in G.
        \end{aligned}
        \]
    \end{definition}
    A principal connection $\omega$ defines an infinitesimal connection in the sense of Definition \ref{def: infinitesmalconnection} via the projection $\Phi\from TP\to TP$, $\Phi(\cdot)=X_{\omega(\cdot)}.$

    In the case of a trivial principal bundle, $P=M\times G$, a principal connection $\omega$ induces a $\lie{g}$-valued form $\varpi$ on $M$ via the pullback of $\omega$ to the trivial section $M\simeq M\times \{I\} \subset P$.
    The principal connection $\omega\from TP\to \lie{g}$ is in turn uniquely defined from $\varpi\from TM\to \lie{g}$ (See \cite[Proposition II.1.4]{kobayashi1963foundations}).

    Given a connection $\omega$ on a principal bundle $P$ and a curve $\gamma(t)$ on the base $M$, one can constructs a horizontal lift as follows \cite[II.3]{kobayashi1963foundations}:
    \begin{enumerate}
        \item Take an arbitrary $C^1$ lift $v(t)\in P$ of $\gamma(t)$ to the principal bundle.
        \item Solve the equation of Lie type 
        $$ 
        \dot{\Gamma}(t) = -\omega\big(\dot{v}(t)\big) \Gamma(t),\quad \Gamma(0)=I.
        $$
        \item The horizontal lift is $u(t)=v(t)\Gamma(t)$.
    \end{enumerate} 
    In the case of a trivial principal bundle, we can take the curve $v(t)=(\gamma(t),I)$, so the equation to be solved becomes
    \begin{equation}
    \dot{\Gamma}(t) = -\xi(t) \Gamma(t), \quad \xi(t)= \varpi(\dot{\gamma}(t)).
    \label{eq: evolution}
    \end{equation}

    \subsection{The principal bundle}
    We consider $C=\RR^2\times \SSS_1$ as a trivial fibre bundle with coordinates $(x,y,\theta)$. 
    Recall the ODE \eqref{eq: thetaODE}
    \[\dot{\theta}= \frac{1}{l}\sin \theta \dot{x}-\frac{1}{l}\cos \theta \dot{y}.\]
    We can view this as a linear function from the tangent vector $(\dot{x}, \dot{y})\in T_{p}\RR^2$ to the Lie algebra of smooth vector fields on $\SSS_1$.

    By letting the initial value $\theta(0)=\theta_0$ vary, the ODE induces a family of flow maps $\Gamma(t)\in \Diff(\SSS_1)$ via $\Gamma(t)(\theta_0)=\theta(t).$

    We want to view the flow maps as the horizontal lifts of the curve $\gamma(t)=(x(t), y(t))$ in $\RR^2$. We can avoid dealing with the infinite-dimensional Lie group of diffeomorphisms and instead work with a finite-dimensional subgroup of $\Diff(\SSS_1)$.

    Let $G\subset \Diff(\SSS_1)$ be the subgroup of diffeomorphisms that can be written as flow maps of the ODE \eqref{eq: thetaODE}. Then $G$ is the group generated by the two vector fields on $\SSS_1$:
	\[\begin{cases} 
		X=\frac{1}{l}\sin \theta \frac{\partial}{\partial \theta}, \\
		Y=-\frac{1}{l}\cos \theta \frac{\partial}{\partial \theta}.
	\end{cases}
	\]

    The Lie algebra of $G$ is generated by the same vector fields. We first compute the Lie bracket of $X$ and $Y$, which we define as\footnote{The sign is due to equation \eqref{eq: evolution}: it is convenient to identify $X$ and $Y$ with right-invariant vector fields on $G$ instead of the usual left-invariant vector fields.}
    $[X,Y]=-\lb X, Y\rb.$
    \[[X,Y]=-\frac{1}{l^2}\frac{\partial}{\partial \theta}.\]
    The vector fields $X,Y, [X,Y]$ are in involution and form a three-dimensional simple Lie algebra, $\lie{g} = \Span\{X, Y, [X,Y]\}$.
 
    It can be shown that
	\[\lie{g}\simeq \lie{su}(1,1)=
	\left\{\begin{pmatrix} i\gamma & \beta \\
		\beta^{*} & -i\gamma 
	\end{pmatrix} \middle| \gamma\in \RR, \beta \in \CC \right\},
	\]
	with isomorphism given by
	\begin{equation}
	\begin{aligned}
		e_1= \begin{pmatrix} 0 & 1 \\
			1 & 0
		\end{pmatrix}& \mapsto -2\sin \theta \frac{\partial}{\partial \theta},\\
		e_2 =  \begin{pmatrix} 0 & i \\
			-i & 0
		\end{pmatrix}& \mapsto 2\cos \theta \frac{\partial}{\partial \theta}, \\
		e_3 = \begin{pmatrix} i & 0 \\
			0 & -i
		\end{pmatrix}& \mapsto 2 \frac{\partial}{\partial \theta}.
	\end{aligned}
	\label{eq: isomorphisms}
	\end{equation}
	It can also be shown (See \cite{foote1998geometry}) that the corresponding Lie group $G$ is isomorphic to the projective special unitary group of signature $(1,1)$.
	\[G\simeq PSU(1,1)= \left\{
	\begin{pmatrix} a & b\\
		b^* & a^*
	\end{pmatrix}
	\middle| |a|^2-|b|^2=1
	\right\}\Large{/} \left\{\pm I\right\},\]
	and that the corresponding diffeomorphisms on $\SSS_1$ are given by:
	\begin{equation}
		\begin{pmatrix} a & b\\
		b^* & a^*
	\end{pmatrix}(e^{i\theta})= \frac{ae^{i\theta}+b}{b^{*}e^{i\theta}+a^*},
	\label{eq: groupaction}
	\end{equation}
    where we have identified $\SSS_1$ with the unit circle in $\CC$. 

    \subsection{The connection}
	We now define a principal connection on the trivial principal bundle $P=\RR^2\times G$.
 
    The relation between the principal connection and the planimeter is as follows:
    Let $\gamma\from[0,T]\to \RR^2$ be a curve in $\RR^2$ that the tracer end follows. One horizontal lift of $\gamma$ is the curve $\gamma, \Gamma \from [0,T] \to \RR^2\times G$ with $\Gamma(0)=I$ and
    \[\Gamma(t)= \begin{pmatrix} a(t) & b(t)\\ b(t)^* & a(t)^*\end{pmatrix}  \in G.
    \]
    If $\theta(0)=\theta_0$ denotes the  initial angle of the planimeter, then $\theta(t)$ is given by
    \[e^{i\theta(t)}= \Gamma(t)(e^{i\theta_0})=\frac{a(t)e^{i\theta_0}+b(t)}{b^*(t)e^{i\theta_0}+a^*(t)}.
    \]

	As a $\lie{g}$-valued connection on $\RR^2$, we can write the connection as $\varpi = \frac{1}{2l}(e_1 dx+ e_2 dy)$ or
	\[
	\varpi(v)= \frac{1}{2l}\begin{pmatrix} 0 & \kappa(v) \\
		\kappa(v)^* & 0
		\end{pmatrix}\in \lie{g}
	\]
	where $\kappa\from T_p \RR^2 \simeq  \RR^2 \to \CC$ is the standard identification. The corresponding principal connection on $P=\RR^2\times G$ satisfies $\omega(X_\xi)=\xi$ for left-invariant vector fields $X_{\xi}$ and $R_g^{*}\omega = Ad_{g^{-1}}\circ \omega$.

    Explicitly, if $(p, g)\in \RR^2\times G =P$ is a point on the principal bundle, we can write a tangent vector as $(v, X_\xi)\in T_p\RR^2\times T_g G \simeq T_{(p,g)}P$. Then
    \[\omega_{(p,g)}(v, X_\xi)= \xi+ Ad_{g^{-1}} \varpi(v).\]

    On the trivial bundle, the curvature 2-form of $\varpi$ can be computed as
	\[\bar{\Omega}_p(u,v)= d \varpi(u,v)+ [\varpi(u), \varpi(v)] =- \frac{\vol(u,v)}{2l^2}\begin{pmatrix} i & 0 \\
		0 & -i
	\end{pmatrix}, \]
    where $u, v \in T_p \RR^2$ and $\vol$ is the area form on $\RR^2$.

    This also determines the curvature of the principal connection $\omega$ (see \cite[II.5]{kobayashi1963foundations}, noting that the curvature is a tensorial 2-form of type $(\mathrm{Ad},\mathfrak{g})$)
    \[\Omega_{(p,g)}\left((u, X_\xi), (v, X_\eta)\right) = Ad_{g^{-1}}\circ \bar{\Omega}_p(u,v)
    \]
    where $(p,g)\in P=\RR^2\times G$ and $(u, X_\xi), (v, X_\eta)$ are two vectors in $T_{(p,g)}P=T_p \RR^2\times T_g G$.

	
	
	\section{Horizontal lifts and the Magnus expansion}
    We have now defined a connection on a principal bundle such that the relationship between the tracer curve and the motion of the planimeter may be understood as a horizontal lift. The key observation is that the related notions of horizontal lifts and developments typically amount to solving differential equations in a Lie group. For this purpose it is profitable to employ techniques of Lie group integration.

    It should be noted that when the tracer end follows a closed curve, the horizontal lift to $P$ describes an element of the \emph{holonomy group} of the connection. There exists a non-abelian version of the Stokes theorem \cite[Corollary 3.6]{schreiber2011smooth} that links the holonomy and curvature of a principal connection. We will here take a more pedestrian approach using the Magnus expansion.

	\subsection{Magnus Expansion}
	Let $\gamma \from [0,T]\to \RR^2$ be the curve traced out by the tracer end, i.e., a parametrization of a boundary of the region $\Omega_1$. For simplicity, assume that the curve is closed and beginning at the origin, so $\gamma(0)=\gamma(T)=(0,0)$.

    The horizontal lift to $P=\RR^2\times G$ is given by the solution to 
    \eqref{eq: evolution}.
	
	The Magnus expansion (see \cite{magnus1954exponential, iserles1999solution}) is an expansion of the evolution as the exponential of a series in $\lie{g}$,
	\[\Gamma(T) = \exp(U_1 + U_2 + U_3 +U_4\dotsc)
	\]
	where $U_1,U_2,\dotsc$ are given as integrals of Lie polynomials.
	
	In our case:
	\[
	\begin{aligned}
	U_1 &= -\int_{0}^T \xi(t) dt \\
	          &= -\frac{1}{2l} \oint_\gamma \left(dx e_1 + dy e_2\right)=0\\
	U_2 &= \frac{1}{2}\int_{0}^T\int_0^{t_1} [\xi(t_1),\xi(t_2)] dt_2 dt_1 \\
	          &= \frac{1}{8l^2} \int_0^T \left[ \dot{x}(t_1) e_1+\dot{y}(t_1) e_2, \int_0^{t_1} \dot{x}(t_2) e_1+\dot{y}(t_2) e_2 dt_2\right] d t_1\\
	          &= \frac{1}{8l^2}  \int_0^T \left[ \dot{x}(t_1) e_1+\dot{y}(t_1) e_2, x(t_1)e_1+y(t_1) e_2\right] d t_1\\
	          &= \frac{1}{4l^2} \oint_\gamma \left( xdy-ydx\right) e_3\\
	          &= \frac{A}{2l^2} \cdot e_3,
	\end{aligned}
	\]
	where $A= \area(\Omega_1)$.
	This again shows how the Prytz planimeter measures area: If we truncate the Magnus series after the leading term, we have
	\[\begin{aligned}
		\Gamma_T &=\exp(U_1+U_2)+\bigO\left(\frac{1}{l^3}\right) \\
		            &= \exp\left(\frac{A}{2l^2} e_3\right)+\bigO\left(\frac{1}{l^3}\right) \\
		            &= \begin{bmatrix} e^{i\frac{A}{2l^2}}&  0 \\ 0 &  e^{-i\frac{A}{2l^2}}\end{bmatrix}+\bigO\left(\frac{1}{l^3}\right).
	\end{aligned}\]
	The corresponding group action from \eqref{eq: groupaction} is  
	\[\Gamma_T\cdot e^{i\theta}= e^{i(\theta+\Delta \theta)}, \quad 
	\text{where}\quad \Delta \theta= \frac{A}{l^2}+\bigO\left(\frac{1}{l^3}\right).\]
    By rearrangement, we get
    \[A=l^2\Delta \theta+ \bigO\left(\frac{1}{l}\right), \]
    which shows that the ``Prytz area'' and actual area agree to an error term of order $\bigO\left(\frac{1}{l}\right)$
 
	Further computations show that the leading error terms in the ``Prytz area'' are controlled by the Magnus expansion terms:
	\[
	\begin{aligned}
	U_3&=\frac{1}{2l^3}\left(M_y e_1-M_x e_2\right),\\
	U_4&=\frac{1}{4l^4}M_2 e_3,
	\end{aligned}
	\]
	where 
    \[\begin{aligned}
         M_x&=\int_{\Omega_1} x dx\wedge dy\\
         M_y &=\int_{\Omega_1} y dx \wedge dy\\
         M_2&=\int_{\Omega_1} (x^2+y^2)dx \wedge dy
    \end{aligned}
    \]
    are first and second moments of area of the region $\Omega_1$.
	
	The term $U_3=\frac{1}{2l^3}\left(M_y e_1-M_x e_2\right)$ explains why starting in the centroid is an advantage: If $M_x=M_y=0$, the first error term is eliminated.
	
	\section{Postscript: development and trailers}
	The relationship between front and back end of the planimeter is of broader interest than simply measuring areas. We begin by noting that the curve induced in the back end by a straight line was already considered Huygens and Leibniz in the 17th century \cite[The tractrix, p. 135]{hairer2000analysis}

    Moreover, note that a bicycle is essentially a planimeter of length $l$ equal to the distance between the centres of the front and back wheel. A car can be approximated by the same construction. In practice, the radius of curvature of the tracer curve $\gamma(t)$ will be large compared to the length $l$, so that front and back follow each other closely. 

    What is of more interest is vehicles with trailers. A vehicle with a system of $n$ trailers can be approximated by $n+1$ planimeters, chained together (see \cite{ljungqvist2019motion}, note that the planimeter is here a ``kinematic bicycle''). In this case the Cartan picture of development becomes natural: the configuration space of each planimeter is isomorphic to $SE(2)$, and the motion of the ($m+1$)th planimeter is the development the the $m$th. It is hoped that this perspective can be usefully applied to control problems within trailer system, an issue of increasing practical importance due to the increased automation of vehicles and requirement to improve efficiency of road haulage \cite{ljungqvist2019motion}.
	
	\printbibliography
\end{document}